\newtheorem{theorem}{Theorem}[section]
\newtheorem{lemma}[theorem]{Lemma}
\newtheorem{proposition}[theorem]{Proposition}
\newtheorem{corollary}[theorem]{Corollary}
\theoremstyle{definition}
\newtheorem{definition}[theorem]{Definition}
\theoremstyle{remark}
\newtheorem{remark}[theorem]{Remark}
\numberwithin{equation}{section}
\def\d{{\rm d}}
\newcommand{\e}{{\rm e}}
\begin{document}
\title{Heath--Jarrow--Merton  model with linear volatility}

\keywords{HJM model, forward rate curve, bond market}
\author[Szymon Peszat]{Szymon Peszat*}
\thanks{* Corresponding author}
\address{Szymon Peszat:  Institute of Mathematics, Jagiellonian University, {\L}ojasiewicza 6, 30--348 Krak\'ow, Poland}
\email{napeszat@cyf-kr.edu.pl}

\author{Jerzy Zabczyk}
\address{Jerzy Zabczyk: Institute of Mathematics, Polish Academy of Sciences, ul. \'Sniadeckich~8, 00-956 Warsaw, Poland}
\email{ jerzy@zabczyk.com}

\thanks{}

\subjclass[2020]{60H30, 93E20, 91G30, 91G10}

\maketitle

\begin{abstract}
We consider the  Heath--Jarrow--Morton model of forward rates processes with linear volatility. The noise is either a Wiener  or a pure jump L\'evy process. We provide formulae for the forward rate processes, and discus the problem of their  global in time existence. 
\end{abstract}
\section{Introduction}
In the classical  \emph{Heath--Jarrow--Morton model},  the forward rates are  It\^o's processes (see \eqref{E21}). Following \cite{Barski-Zabczyk-1,Barski-Zabczyk,   Bjork-Christensen, Bjork-Kabanov-Runggaldier,  Bjork-DiMassi-Kabanov-Runggaldier, Eberlein-Raible,J.Z, Jakubowski-Zabczyk}) we consider the case where the driving noise is a L\'evy process $L$ taking values in $\mathbb{R}^d$. It is known (see also Section \ref{S2}) that under the Musiela parametrization the forward rate is the solution to the stochastic partial differential equation 
\begin{equation}\label{E11}
\begin{aligned}
\d r (t,x)&= \left( \frac {\partial r}{\partial x}(t,x)+ b(t,x,r(t,x))\right)\d t \\
&\qquad +\langle \sigma(t,x,r(t-,x)) , \d L(t)\rangle_{\mathbb{R}^d}. 
\end{aligned}
\end{equation}

In the paper we are concerned with the case where the \emph{volatility} $\sigma$ depends linearly on the forward rate $r$; that is 
\begin{equation}\label{E12}
\sigma(t,x,r(t,x))= g(t)r(t,x), 
\end{equation}
where $g$ is an $\mathbb{R}^d$-valued  predictable process. In this case we provide an analytic form of the solution (see Theorems \ref{T34},  \ref{T41}, and \ref{T71}). 

In the  paper we extend results of \cite{Peszat-Zabczyk-1} and Chapter 20 of \cite{Peszat-Zabczyk}, but what is more important we fill some gaps in arguments from \cite{Peszat-Zabczyk-1, Peszat-Zabczyk} providing  correct equation  for the primitive function, see Proposition  \ref{P32}. 

\section{Heath--Jarrow--Morton model}\label{S2}
Let us denote by $B(t,S)$ the price at time $t$ of a bond paying $1$ at time $S$. Assume that the  \emph{forward rates} 
$$
f(t,S)= -\frac{\partial }{\partial S} \log B(t,S), \qquad 0\le t\le S, 
$$
are given by the stochastic equation 
\begin{equation}\label{E21}
\d f(t,S)= F(t,S,f(t,S))\d t + \langle G(t,S,f(t-,S)),  \d L(t)\rangle_{\mathbb{R}^d}, 
\end{equation}
driven by an $\mathbb{R}^d$-valued  L\'evy  process $L$ defined on a filtered probability space $(\Omega,\mathfrak{F},(\mathfrak{F}_t),\mathbb{P})$. In \eqref{E21}, $F$ and $G$ are $\mathbb{R}$  and $\mathbb{R}^{d}$ -valued processes. Clearly 
$$
B(t,S)=\e^{-\int_t^S f(t,y)\d y}.
$$ 
Let us rewrite the prices and forward rates in the so-called \emph{Musiela parametrization}
$$
P(t, x):= B(t,t+x), \quad r(t, x):=f(t,t+x), \qquad x\ge 0.  
$$
Then 
$$
P(t,x)= \e^{-\int_0^x r(t,y)\d y}, \qquad r(t,x)=-\frac{\partial }{\partial x} \log P(t,x), 
$$
and 
\begin{align*}
r(t,\cdot)&= \mathcal{S}(t)r(0,\cdot )+ \int_0^t \mathcal{S}(t-s)b(s,\cdot, r(s,\cdot) )\d s \\
&\qquad + \int_0^t \mathcal{S}(t-s)\langle \sigma(s-,\cdot, r(s-, \cdot)),\d L(s)\rangle_{\mathbb{R}^d}, 
\end{align*}
where 
\begin{align*}
b(t,x, r(t,x))&:= F(t,t+x, f(t,t+x))= F(t,t+x, r(t,x)), \\ 
\sigma(t,x, r(t,x))&:= G(t,t+x,f(t,t+x))= G(t,t+x,r(t,x)),
\end{align*}
and for  a real-valued function $\psi$, 
$$
\mathcal{S}(t)\psi(x)=\psi(x+t),\qquad  t\ge 0,\ x\in \mathbb{R}. 
$$
Since $\mathcal{S}$ is the semigroup generated by the derivative operator $\frac {\partial }{\partial x}$ the forward rate  $r$ is the mild solution to stochastic partial differential equation \eqref{E11}. In that follows we assume \eqref{E12}; that is that the volatility $\sigma$ depends linearly on the forward rate $r$. We are looking for a formula for the solution. Therefore without any loss of generality we may assume that $\mathbb{P}$ is the martingale measure (see e.g. \cite{Barski-Zabczyk,Peszat-Zabczyk}). This implies that $b$ as a function of $r$ is determined by $g$ and the noise $L$,  by the so-called \emph{Heath--Jarrow--Morton condition}.  Namely, if $L$ is a standard Wiener process then,  see \cite{H-J-M}, 
\begin{equation}\label{E22}
b(t,x)= \|g(t)\|_{\mathbb{R}^d}^2\,  r(t,x)\int_{0}^{x}r(t,y)\d y. 
\end{equation}
In the more general  case of $L$ being a L\'evy process, $b$ is determined by  $\sigma$ through  the Laplace transform of $L$, namely,  see e.g. \cite{Barski-Zabczyk-1,Barski-Zabczyk,   Bjork-Christensen, Bjork-Kabanov-Runggaldier,  Bjork-DiMassi-Kabanov-Runggaldier, Eberlein-Raible,J.Z, Jakubowski-Zabczyk}, 
\begin{equation}\label{E23}
b(t,x)=\frac{\partial }{\partial x} J\left(g(t)\int_0^xr(t,y)\d y\right),
\end{equation}
where 
\begin{equation}\label{E24}
J(\xi):= \log \mathbb{E}\exp\left\{-\langle \xi, L(1)\rangle_{\mathbb{R}^d}\right\}, \qquad \xi\in \mathbb{R}^d. 
\end{equation}
In the following particular cases:  

\smallskip 
\noindent\textbf{Gaussian case} Assume that $L=W$, where $W$ is a Wiener process with covariance $Q$; 
$$
\mathbb{E}\, \langle W(t),\xi\rangle_{\mathbb{R}^d}  \langle W(s),\eta\rangle_{\mathbb{R}^d}= \min\{s,t\}\langle Q\xi,\eta\rangle_{\mathbb{R}^d}. 
$$
we have 
$$
J(\xi)= \frac{1}{2}\langle Q\xi,\xi\rangle_{\mathbb{R}^d}. 
$$

Note that if $W$ is standard, then $Q$ is the identity operator, and 
\begin{equation}\label{E25}
\frac{\partial }{\partial x} J\left(g(t)\int_0^xr(t,u)\d u\right)=\|g(t)\|_{\mathbb{R}^d}^2\,  r(t,x)\int_{0}^{x}r(t,u)\d u, 
\end{equation}
as required by \eqref{E22}. 

\smallskip
\noindent\textbf{$\alpha$-subordinator case}
Assume that $d=1$.  Let    $L$ be  the $\alpha$-stable subordinator, where $\alpha \in (0,1)$, that is 
\begin{equation}\label{E26}
L(t)=\int_0^t \int_{\mathbb{R}^d}\eta \pi(\d s, \d \eta), 
\end{equation}
$\pi$  is a Poisson measure on $[0,+\infty)\times \mathbb{R}^d$ with intensity $\d s\nu(\d \eta)$, 
$$
\nu(\d \eta)= c_\alpha \vert \eta \vert ^{-1-\alpha}\chi_{\{\eta >0\}}. 
$$
Then, see e.g. \cite{Applebaum}[pp. 51-52],    
$$
J(\xi)=\begin{cases}- \xi  ^\alpha,&\text{$\xi\ge 0$},\\
+\infty,&\text{$\xi< 0$}. 
\end{cases}
$$ 

\smallskip
\noindent\textbf{Jump martingale case} Assume that  
$$
L(t)=\int_0^t \int_{\mathbb{R}^d}\eta \left(\pi(\d s, \d \eta)- \d s \nu(\d \eta)\right), 
$$
$\pi$  is a Poisson measure on $[0,+\infty)\times \mathbb{R}^d$ with intensity $\d s\nu(\d \eta)$. Then 
\begin{equation}\label{E27}
J(\xi)= \int_{\mathbb{R}^d}\left[ \e^{-\langle \xi,\eta\rangle_{\mathbb{R}^d}}-1 +\langle \xi,\eta\rangle_{\mathbb{R}^d}\right]\nu (\d \eta). 
\end{equation}
In order to assure positivity of $r(t,x)$ we assume that $\nu$ is concentrated on $[-m,+\infty)^d$ for some $m\ge 0$,  and that $g(t)\in [0,m^{-1})^d$ for $t\ge 0$, $\mathbb{P}$-a.s., see Theorem 20.14 from \cite{Peszat-Zabczyk}.  
\section{General result}
Assume that in \eqref{E11}, $L$ is a  L\'evy process with L\'evy exponent $J$ given by \eqref{E24}. Then by \eqref{E12} and \eqref{E22} the forward rate processes $r(t, x)$ are defined by the following first order stochastic partial differential equation (Heath--Jarrow--Morton--Musiela equation or HJMM equation for short)
\begin{equation}\label{E31}
\left\{ 
\begin{aligned}
\d r(t,x)&= \left[ \frac{\partial r}{\partial x}(t,x)+ \frac{\partial }{\partial x} J\left(g(t)\int_0^xr(t,y)\d y\right)\right] \d t \\
&\quad + r(t-,x) \langle g(t), \d L(t)\rangle_{\mathbb{R}^d}, \\
r(0, x)&=r_0(x). 
\end{aligned}
\right. 
\end{equation} 
We assume that $g$ is a measurable adapted process satisfying 
$$
\mathbb{P}\left( \int_0^T\|g(s)\|^2_{\mathbb{R}^d}\d s <+\infty\right), \qquad \forall\, T<+\infty. 
$$
Equation \eqref{E31} can be considered on different state spaces $E$. For example  on  $E=L^1_{\text{loc}}([0,+\infty))$ or $E=L^1([0,+\infty))$.   On these spaces  the stochastic integral is well-defined (see e.g. \cite{Brzezniak-Veraar,veraar1}), $\frac{\partial }{\partial x}$ defines $C_0$-semigroup $\mathcal{S}(t)$, $t\ge 0$, of translations $\mathcal{S}(t)\psi(x)=\psi(x+t)$, and the nonlinear map 
$$
E \ni \psi \mapsto \psi(\cdot )\int_0^{\cdot}\psi(y)\d y\in E
$$
is locally Lipschitz. 

In the present paper we deal with the \emph{strong solution}.  Let $\eth\not \in \mathbb{R}$. 
\begin{definition} A random field $r$ on $[0,+\infty)\times [0,+\infty)$ taking values in $\mathbb{R}\cup\{\eth\}$  is called a \emph{strong solution} to \eqref{E31}, if for any $x\in [0,+\infty)$ there is a stopping time $\tau(x,r_0)$ such that: 
\begin{itemize}
\item $r\in C^{0,1}(\mathcal{D})$, $\mathbb{P}$-a.s., where 
$$
\mathcal{D}=\{(t,x)\in [0,+\infty)\times[0,+\infty)\colon t<\tau(x,r_0)\},
$$
\item $r(t,x)=\eth $ for $(t,x)\not \in \mathcal{D}$,
\item for any $x\in [0,+\infty)$, $r(\cdot,x)$ is measurable adapted, and 
\begin{align*}
r(t,x)&= r_0(x)+ \int_{0}^t \left[ \frac{\partial r}{\partial x}(s,x)+  \frac{\partial }{\partial x} J\left(g(s)\int_0^xr(s,y)\d y\right)\right]\d s\\
&\qquad + \int_0^t r(s-,x)\langle g(s),\d L(s)\rangle_{\mathbb{R}^d}\quad \text{for} \ t<\tau(x,r_0), \ \mathbb{P}-a.s.
\end{align*}
\end{itemize}
\end{definition}

Denote by $u(t,\cdot)$ the following  primitive of $r(t,\cdot)$; 
$$
u(t,x):= \int_0^x r(t, y)\d y. 
$$

\begin{proposition}\label{P32}
Assume that for any $x\ge 0$, $\tau(x,r_0)\le \tau(0,r_0)$. Then the primitive $u$ satisfies the following equation
\begin{equation}\label{E32}
\begin{aligned}
\d u(t, x)&= \left[ \frac{\partial u}{\partial x} (t,x)- \frac{\partial u}{\partial x}(t,0)+ J\left(g(t)u(t,x)\right)\right]\d t \\
&\qquad +  u(t-,x)\langle g(t),\d L(t)\rangle_{\mathbb{R}^d},\\
u(t,0 )&=0,\\
 u(0,x)&=\int_0^x r_0(y)\d y.  
\end{aligned}
\end{equation}
\end{proposition}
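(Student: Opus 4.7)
The plan is to derive \eqref{E32} by integrating the strong-solution equation for $r$ in the spatial variable from $0$ to $x$, using the fundamental theorem of calculus on the deterministic drifts and a stochastic Fubini theorem on the noise term. The stopping-time hypothesis $\tau(x,r_0)\le\tau(0,r_0)$ is what allows the $y$-integrated identity to make sense on $[0,t]$ whenever $t<\tau(x,r_0)$.

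First I would fix $t<\tau(x,r_0)$ and write the strong-solution equation for $r(t,y)$ with $y\in[0,x]$:
\begin{align*}
r(t,y)&=r_0(y)+\int_0^t\frac{\partial r}{\partial x}(s,y)\d s+\int_0^t\frac{\partial }{\partial x}J\!\left(g(s)\int_0^y r(s,z)\d z\right)\d s\\
&\qquad +\int_0^t r(s-,y)\langle g(s),\d L(s)\rangle_{\mathbb{R}^d}.
\end{align*}
This requires $\tau(y,r_0)\ge\tau(x,r_0)$ for every $y\in[0,x]$, a strengthening of the stated hypothesis which I would extract from the transport structure of \eqref{E31}. Integrating the above identity against $\d y$ over $[0,x]$ and handling the four pieces separately yields \eqref{E32}. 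The transport drift, after a deterministic Fubini and the fundamental theorem of calculus, gives $\int_0^t[r(s,x)-r(s,0)]\d s=\int_0^t\bigl[\tfrac{\partial u}{\partial x}(s,x)-\tfrac{\partial u}{\partial x}(s,0)\bigr]\d s$. The HJM drift, by the same argument, yields $\int_0^t[J(g(s)u(s,x))-J(0)]\d s$; since $J(0)=\log\mathbb{E}\exp 0=0$ by \eqref{E24}, only $\int_0^t J(g(s)u(s,x))\d s$ survives. This is precisely where the gap in \cite{Peszat-Zabczyk-1,Peszat-Zabczyk} is being repaired: the $J(0)$ contribution drops out, but the $\partial_x u(s,0)$ boundary term from the transport part must be retained. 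For the stochastic term, a stochastic Fubini theorem (see \cite{Brzezniak-Veraar,veraar1}) gives
\begin{equation*}
\int_0^x\!\int_0^t r(s-,y)\langle g(s),\d L(s)\rangle_{\mathbb{R}^d}\,\d y=\int_0^t u(s-,x)\langle g(s),\d L(s)\rangle_{\mathbb{R}^d}.
\end{equation*}

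Assembling the four pieces produces the integrated form of \eqref{E32}; the boundary value $u(t,0)=0$ and the initial condition $u(0,x)=\int_0^x r_0(y)\d y$ are immediate from the definition of $u$. The main obstacle I anticipate is the stochastic Fubini step: it requires joint measurability and suitable integrability of $(s,y)\mapsto r(s-,y)\mathbf{1}_{s<\tau(x,r_0)}$ on $[0,t]\times[0,x]$, both of which follow from $r\in C^{0,1}(\mathcal{D})$ after a localisation, but must be checked against the chosen integration framework. A secondary technical point is the promotion of the hypothesis $\tau(x,r_0)\le\tau(0,r_0)$ to $\tau(x,r_0)\le\tau(y,r_0)$ for every $y\in[0,x]$, needed so that $r(\cdot,y)$ is a genuine classical solution on the whole of $[0,t]\times[0,x]$ and all the termwise manipulations above are legitimate.
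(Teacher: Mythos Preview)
Your approach is correct and conceptually equivalent to the paper's, but organized differently. You integrate the $r$-equation over $y\in[0,x]$ and invoke (deterministic and stochastic) Fubini term by term; the paper instead observes that the $r$-equation is precisely the $x$-derivative of the expression
\[
u(t,x)-u(0,x)-\int_0^t\Bigl[\tfrac{\partial u}{\partial x}(s,x)+J\bigl(g(s)u(s,x)\bigr)\Bigr]\d s-\int_0^t u(s-,x)\langle g(s),\d L(s)\rangle_{\mathbb{R}^d},
\]
so this expression is an $x$-independent function $C(t)$, and then evaluates at $x=0$ (legitimate because $t<\tau(x,r_0)\le\tau(0,r_0)$) to find $C(t)=-\int_0^t\tfrac{\partial u}{\partial x}(s,0)\,\d s$, using $u(s,0)=0$ and $J(0)=0$. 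The two arguments are dual via the fundamental theorem of calculus: yours is ``integrate the $r$-equation in $y$'', the paper's is ``the $r$-equation is the $x$-derivative of the would-be $u$-equation, hence the latter holds up to an integration constant''. The paper's framing makes the role of the hypothesis transparent (it is used only to justify evaluating at $x=0$) and buries the interchange of $\partial_x$ with the stochastic integral inside the bare assertion that the derivative of the braces vanishes, so no stochastic Fubini theorem is explicitly cited. Your route is more explicit about that technical ingredient, which is a virtue; and the monotonicity $\tau(y,r_0)\ge\tau(x,r_0)$ for $y\in[0,x]$ that you flag as an additional prerequisite is in fact also implicitly needed in the paper's argument (for $u(\cdot,x)$ to be defined and for the derivative-vanishing to propagate from $x$ down to $0$), so your caution there is appropriate rather than a defect of your method.
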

\begin{proof} By definition of the solution for any $x\ge 0$ there is a stopping time $\tau(x,r_0)$ such that for  $t<\tau(x,r_0)$, 
\begin{align*}
&\frac{\partial }{\partial x}\left\{ u(t,x)-u(0,x)- \int_0^t \left[  \frac{\partial u}{\partial x}(s,x)+ J\left(g(s)u(s,x)\right)\right]\d s\right. \\
&\qquad \left.  - \int_0^t u(s-,x)\langle g(s),\d L(s)\rangle_{\mathbb{R}^d}\right\}=0.
\end{align*}
Therefore for arbitrary $0\le t$ there is an independent  on $x\colon t<\tau(x,r_0)$ constant $C(t)$ such that 
\begin{align*}
u(t,x)&= u(0,x)+ \int_0^t \left[ \frac{\partial u}{\partial x}(s,x) + J\left(g(s)u(s,x)\right)\right]\d s \\
&\quad + \int_0^t u(s-,x)\langle g(s),\d L(s)\rangle_{\mathbb{R}^d}+ C(t). 
\end{align*}
Let $t<\tau(x,r_0)\le \tau(0,r_0)$. Then 
\begin{align*}
u(t,0)&= u(0,0)+ \int_0^t \left[ \frac{\partial u}{\partial x}(s,0) + J\left(g(s)u(s,0)\right)\right]\d s \\
&\quad + \int_0^t u(s-,0)\langle g(s),\d L(s)\rangle_{\mathbb{R}^d}+ C(t). 
\end{align*}
Since by definition $u(s,0)=0$ and $J(0)=0$,  we have 
$$
C(t)=- \int_0^t \frac{\partial u}{\partial x}(s, 0)\d s, 
$$
which gives the desired equation on $u$. 
\end{proof}

In order to solve the equation for $u$ let us consider the flow $Y$ to the  ordinary stochastic differential equation 
$$
\d y(t)= J\left(g(t)y(t)\right)\d t + y(t-)\langle g(t),\d L(t)\rangle_{\mathbb{R}^d},
$$
that is we are looking for random field $Y$ such that 
\begin{equation}\label{E33}
\begin{aligned}
\d Y(t,x) &=  J\left(g(t)Y(t,x)\right)\d t + Y(t-,x)\langle g(t), \d L (t)\rangle_{\mathbb{R}^d},\\
Y(0,x) &= x.
\end{aligned}
\end{equation}
The solution can be local in time, that is defined for $t<\tau(x)$. 
\begin{proposition}\label{P33}
Let $Y=Y(t,x),\quad t<\tau(x)$, be the solution to \eqref{E33}. Assume that the blow up time $\tau(x)$ depends continuously on $x$, and that for $t<\tau(x)$, $Y(t,x)$ is continuously differentiable in $x$,  then
$$
u(t, x) =Y\left(t, \int_t^{t+x} r_0(y)\d y\right) ,\quad t\ge 0,\ x\ge 0,
$$
is a solution  \eqref{E32}. The solution $u$ is defined up to the time $t< \tau\left(\int_t^{t+x}r_0(y)\d y \right)$. 
\end{proposition}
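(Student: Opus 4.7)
The strategy is to apply the It\^o chain rule in $t$ (for each fixed $x$) to the candidate $u(t,x) := Y(t, v(t,x))$, where $v(t,x) := \int_t^{t+x} r_0(y)\,\d y$. The function $v$ is deterministic and $C^1$ in $t$; direct computation gives $\partial_x v(t,x) = r_0(t+x)$ and $\partial_t v(t,x) = r_0(t+x) - r_0(t)$, yielding the key identity $\partial_t v(t,x) = \partial_x v(t,x) - \partial_x v(t,0)$, which is the deterministic skeleton of the linear part of \eqref{E32}.

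The initial and boundary conditions come for free. From $Y(0,z) = z$, one gets $u(0,x) = v(0,x) = \int_0^x r_0(y)\,\d y$. From $v(t,0) = 0$ together with the fact that $z = 0$ is a stationary solution of \eqref{E33} (because $J(0)=0$ kills the drift and the multiplicative noise vanishes at $Y=0$), one concludes $Y(t,0) = 0$ and hence $u(t,0) = 0$.

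For the dynamics, I observe that $v(\cdot, x)$ is continuous and of finite variation in $t$, while $Y(\cdot, z)$ is a L\'evy-driven semimartingale satisfying \eqref{E33}, and $Y$ is $C^1$ in $z$ by the standing hypothesis. The It\^o chain rule for the composition $t \mapsto Y(t, v(t,x))$ then produces no cross-variation term and gives
\begin{align*}
\d u(t,x) &= J(g(t) u(t,x))\,\d t + u(t-,x)\langle g(t), \d L(t)\rangle_{\mathbb{R}^d} \\
&\quad + \partial_z Y(t, v(t,x))\,[\partial_x v(t,x) - \partial_x v(t,0)]\,\d t.
\end{align*}
Using $\partial_x u(t,x) = \partial_z Y(t, v(t,x))\,\partial_x v(t,x)$, the last bracket rewrites as $\partial_x u(t,x) - \partial_x u(t,0)$, matching the drift in \eqref{E32}; the lifetime statement $t < \tau(v(t,x))$ follows from the composition being well-defined precisely on that random set, combined with the assumed continuity of $\tau$ in its argument.

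The crux --- and the main obstacle --- is the final identification step: the chain rule yields the boundary-correction $\partial_z Y(t, v(t,x))\,\partial_x v(t,0)$, while the drift in \eqref{E32} contains $\partial_x u(t,0) = \partial_z Y(t,0)\,\partial_x v(t,0)$. Reconciling these uses the specific structure of $v$ (in particular $v(t,0) = 0$, so that the flow $Y(t,\cdot)$ is evaluated at the stationary point $0$), together with the $C^1$-regularity of $Y$ in $z$ and the continuity of $\tau$; a routine stopping-time localisation on $\{t < \tau(v(t,x))\}$ then completes the verification. A secondary but straightforward technical point is handling jumps of $L$: since $v$ is continuous, the jumps of $u(\cdot, x)$ come solely from $Y$, so no extra jump-variation terms enter the It\^o calculation.
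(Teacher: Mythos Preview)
Your approach---applying an It\^o--Kunita chain rule to the composition $Y(t,v(t,x))$ with $v(t,x)=\int_t^{t+x}r_0$---is exactly the paper's: there the same formula is derived by hand via the telescoping decomposition $I_1+I_2$ over a partition, yielding
\[
\d u(t,x)=\bigl[J(g(t)u(t,x))+\partial_zY(t,v(t,x))\,\partial_tv(t,x)\bigr]\d t+u(t-,x)\langle g(t),\d L(t)\rangle_{\mathbb R^d},
\]
with $\partial_tv(t,x)=r_0(t+x)-r_0(t)$. The initial and boundary checks, and the handling of jumps, are also the same.

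The step you single out as ``the crux'' is a genuine obstruction, and your proposed resolution does not work. The chain rule delivers the boundary correction $\partial_zY\bigl(t,v(t,x)\bigr)\,r_0(t)$, whereas \eqref{E32} calls for $\partial_xu(t,0)=\partial_zY(t,0)\,r_0(t)$; matching these would require $\partial_zY\bigl(t,v(t,x)\bigr)=\partial_zY(t,0)$ for every $x$, which is false in general. A concrete witness is the Gaussian flow of Lemma~\ref{L42}: there $Y(t,z)=B(t)z/(1-A(t)z)$ with $A(t)=\tfrac12\int_0^tM_g^{-1}\|g\|^2$ and $B(t)=M_g^{-1}(t)$, so $\partial_zY(t,z)=B(t)/(1-A(t)z)^2$ genuinely depends on $z$. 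Neither the fact that $v(t,0)=0$ (which only pins down where $\partial_xu(t,0)$ is evaluated, not where the chain-rule term is evaluated) nor any ``routine stopping-time localisation'' forces $\partial_zY\bigl(t,v(t,x)\bigr)$ to collapse to $\partial_zY(t,0)$; the discrepancy $r_0(t)\bigl[\partial_zY(t,v(t,x))-\partial_zY(t,0)\bigr]$ is $x$-dependent and survives the subsequent $x$-differentiation leading to Theorem~\ref{T34}. You were right to isolate this point; you should not paper over it, because---as the Gaussian computation shows---the candidate $u$ simply does not satisfy \eqref{E32} as written. The paper's proof states the target identity as a ``claim'' and then establishes the $I_1,I_2$ limits, but it does not bridge this same gap either.
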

\begin{proof}
Note that $u(0,0)=0$. Therefore 
$$
Y\left(0,\int_0^x r_0(y)\d y \right)= \int_0^x r_0(y)\d y  = u(0,x),
$$
as required.  We claim that
$$
\begin{aligned}
&\d Y\left(t, \int_t^{t+x} r_0(y)\d y\right) = \d Y(t, u(0,t+x)-u(0,t))\\
&= \left(\frac{\partial }{\partial x} Y(t, u(0,t+x)-u(0,t))- \frac{\partial }{\partial x} Y(t, u(0,t)-u(0,t))\right)\d t\\
&\quad +  J\left(g(t)Y(t,u(0,t+x)-u(0,t))\right)\d t
\\
&\quad + Y(t-, u(0,t+x)-u(0,t))\langle g(t),\d L(t)\rangle_{\mathbb{R}^d}.
\end{aligned}
$$
Let $\psi\colon [0,+\infty)\times [0,+\infty)\mapsto \mathbb{R}$ be a differentiable function of both variables. Given $x>0$ consider the process $\left(Y(t, \psi(t,x)),\,t\ge 0\right)$.  Then for any partition $0=t_0<t_1<\ldots<t_N=t$,
$$
Y(t, \psi(t,x))- Y(0, \psi(0,x))= I_1+I_2,
$$
where
\begin{align*}
I_1 &:= \sum_n \left( Y(t_{n+1}, \psi(t_{n+1},x)) - Y(t_{n+1}, \psi(t_n,x)) \right),\\
I_2 &:= \sum_n \left(Y(t_{n+1}, \psi(t_n,x)) - Y( t_n, \psi(t_n,x)) \right).
\end{align*}
We have 
\begin{align*}
I_1 &=\sum_n \left[\frac{\partial Y}{\partial x}\left(t_{n+1}, \psi(t_{n+1},x) + \varepsilon _n\left(\psi(t_n,x)-\psi(t_{n+1},x)\right)\right)\right.\\
&\qquad  \left.\phantom{\frac{\partial Y}{\partial x}} \times  \left(\psi(t_{n+1},x)-\psi(t_n,x)\right)\right]\\
&=\sum_n \left[ \frac{\partial Y}{\partial x}\left(t_{n+1}, \psi(t_{n+1},x)+\varepsilon _n\left(\psi(t_n,x)-\psi(t_{n+1},x)\right)\right)\right.\\
&\qquad \left. \phantom{\frac{\partial Y}{\partial x}}\times  \frac{\partial \psi}{\partial t}\left(t_n+ \eta_n(t_{n+1}-t_n) ,x\right)(t_{n+1}-t_n)\right],
\end{align*}
where $|\varepsilon_n|\le 1$ and $|\eta_n|\le 1$. Taking into account the continuous dependence of $\frac{\partial Y}{\partial x}(t,x)$ on the second variable we obtain
$$
I_1\to \int_0^t \frac{\partial Y}{\partial x}(s, \psi(s,x))\frac{\partial \psi}{\partial t}(s,x)\d s\quad \text{as $n\uparrow \infty$}.
$$
Let 
$$
Z(t):= \int_0^t \langle g(s), \d L(s)\rangle. 
$$
Since $Y$ satisfies \eqref{E33},
$$
I_2 =\sum_n \left( \int_{t_n}^{t_{n+1}} J\left( g(s)Y(s-, \psi(t_{n+1},x))\right) \d s +\int_{t_n}^{t_{n+1}} Y(s-, \psi(t_n,\xi)) \d Z(s)\right)
$$
and therefore
$$
I_2 \to  \int_0^ t J\left( g(s)Y(s-,\psi(s,x))\right)\d s + \int_0^t Y(s-,\psi(s,x))\d Z(s).
$$
\end{proof}
As a consequence of Propositions \ref{P32}, \ref{P33} we have the following result. 
\begin{theorem}\label{T34}
Assume that \eqref{E33} has a local in time solution $Y=Y(t,x),\quad t<\tau(x)$. Assume that $\tau(x)\le \tau(0)$, $\tau(x)$ depends continuously on $x$, and that for $t<\tau(x)$, $Y(t,x)$ is continuously differentiable in $x$,  then for any $r_0$, 
\begin{align*}
r(t,x) &=\frac{\partial u}{\partial x}(t, x)=\frac{\d}{\d x}  Y\left(t, \int_t^{t+x} r_0(y)\d y\right)
\end{align*}
is a strong solution to \eqref{E31}. 
\end{theorem}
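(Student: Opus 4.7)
The plan is to combine Propositions \ref{P32} and \ref{P33} with a formal differentiation in the spatial variable $x$. By Proposition \ref{P33}, under the hypotheses of the theorem the field
$$
u(t,x) := Y\!\left(t,\, \int_t^{t+x} r_0(y)\,\d y\right)
$$
is a solution of the primitive equation \eqref{E32} on the random domain where $t < \tau(\int_t^{t+x} r_0(y)\,\d y)$, and satisfies $u(t,0)=0$. I would then show that $r(t,x) := \partial u/\partial x (t,x)$ is a strong solution of \eqref{E31} by differentiating \eqref{E32} term by term in $x$ and verifying that the resulting equation coincides with \eqref{E31}.

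The term-by-term check is straightforward once the differentiation is granted. Since $\partial u/\partial x(t,0)$ does not depend on $x$, differentiating the drift of \eqref{E32} produces
$$
\frac{\partial r}{\partial x}(t,x) + \frac{\partial }{\partial x} J\!\left(g(t)\int_0^x r(t,y)\,\d y\right),
$$
where I have used the identity $u(t,x) = \int_0^x r(t,y)\,\d y$, which itself follows from $u(t,0)=0$ together with $r = \partial u/\partial x$. Differentiating the stochastic integrand $u(t-,x)$ in $x$ yields $r(t-,x)$, so the noise term has exactly the form required by \eqref{E31}. Finally, from $u(0,x) = \int_0^x r_0(y)\,\d y$ one recovers the initial condition $r(0,x) = r_0(x)$.

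The main obstacle is the rigorous justification of commuting $\partial/\partial x$ with both the Lebesgue integral in time and the stochastic integral on the right-hand side of \eqref{E32}. Thanks to the explicit representation of $u$, the chain rule produces the candidate derivative
$$
r(t,x) = \frac{\partial Y}{\partial x}\!\left(t,\, \int_t^{t+x} r_0(y)\,\d y\right)\, r_0(t+x),
$$
which is well-defined by the assumed continuous differentiability of $Y(t,\cdot)$, and yields a jointly measurable, adapted process with the required $C^{0,1}$-regularity once $r_0$ is continuous (the general case being handled by approximation). The continuity of $\tau(\cdot)$ keeps the existence set $\mathcal{D}$ open, and the assumption $\tau(x) \le \tau(0)$ is precisely what is needed to apply Proposition \ref{P32}. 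To commute the spatial derivative with the stochastic integral I would either invoke a stochastic Fubini theorem applied to a smooth localisation of $u(\cdot,\cdot)$ in $x$, or, equivalently, differentiate the flow SDE \eqref{E33} with respect to its initial condition and observe that the derived SDE for $\partial Y/\partial x$ is exactly what results from differentiating \eqref{E32} in $x$; this reduces the remaining work to a pathwise computation once the sample-path regularity of $Y$ in $x$ asserted in the hypotheses is in hand.
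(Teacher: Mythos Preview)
Your approach is essentially the same as the paper's: the paper presents Theorem~\ref{T34} simply as a consequence of Propositions~\ref{P32} and~\ref{P33}, without an explicit proof, and you have correctly identified that the remaining step is to differentiate \eqref{E32} in $x$ to recover \eqref{E31}. Your discussion of the commutation of $\partial/\partial x$ with the time and stochastic integrals is more careful than anything the paper provides, so there is no gap to flag.
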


\section{Gaussian case}
Assume that in \eqref{E11}, the L\'evy process is a standard Wiener process (denoted by $W$). Then by \eqref{E25} and \eqref{E31} the forward rate processes $r(t, x)$ are defined by the following first order stochastic partial differential equation 
\begin{equation}\label{E41}
\left\{ 
\begin{aligned}
\d r(t,x)&= \left( \frac{\partial r}{\partial x}(t,x)+ \|g(t)\|^2_{\mathbb{R}^d}r(t,x)\int_0^x r(t,y)\d y\right)\d t \\
&\quad + r(t,x) \langle g(t), \d W(t)\rangle_{\mathbb{R}^d}, \\
r(0, x)&=r_0(x). 
\end{aligned}
\right. 
\end{equation} 

Set 
\begin{equation}\label{E42}
M_g(t):= \exp\left\{ -\int_0^t \langle g(s), \d W(s)\rangle_{\mathbb{R}^d}+ \frac{1}{2}\int_0^t \|g(s)\|^2_{\mathbb{R}^d}\d s\right\}. 
\end{equation}
\begin{theorem}\label{T41}
Assume that $r_0\in C^1([0,+\infty))$ is a strictly positive function. Then there  is a unique strong solution $r$ to \eqref{E41}, $r$ is strictly positive  and it is given by the formula
\begin{align*}
&r(t,x)\\
&=\frac{\d }{\d x} \left[ \left( \int_t^{t+x} r_0(y) \d y\right)^{-1} - \frac{1}{2}\int_0^t\ M^{-1}_g(s)\|g(s)\|^2_{\mathbb{R}^d}\d s\right]^{-1}M^{-1}_g(t)\\
\end{align*}
The solution is defined on the set 
$$
\mathcal{D}:= \left\{ (t, x)\in [0,+\infty)\times [0,+\infty)\colon 0\le t<\tau(x,r_0)\right\},
$$
where 
$$
\tau(x,r_0)=\inf\left\{t\ge 0\colon \int_t^{t+x} r_0(y)\d y =   2\left(\int_0^t M^{-1}_g(s)\|g(s)\|^2_{\mathbb{R}^d}\d s\right)^{-1}\right\}. 
$$
\end{theorem}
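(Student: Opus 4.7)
The plan is to apply Theorem~\ref{T34}, which reduces the problem to solving the ordinary SDE \eqref{E33} in the state variable alone. In the Gaussian case $J(\xi)=\tfrac{1}{2}\|\xi\|_{\mathbb{R}^d}^2$, so \eqref{E33} becomes the scalar Bernoulli-type SDE
\[
dY(t,x)=\tfrac{1}{2}\|g(t)\|_{\mathbb{R}^d}^2 Y(t,x)^2\,dt+Y(t,x)\langle g(t),dW(t)\rangle_{\mathbb{R}^d},\qquad Y(0,x)=x.
\]
Everything else is routine once an explicit formula for $Y$ is in hand.

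The main computational step is to integrate this SDE. I would try the substitution $Y(t,x)=M_g^{-1}(t)Z(t,x)$; by It\^o's formula applied to \eqref{E42}, $M_g^{-1}$ satisfies $dM_g^{-1}=M_g^{-1}\langle g,dW\rangle_{\mathbb{R}^d}$, so if $Z(\cdot,x)$ is of finite variation the product rule gives $dY=Y\langle g,dW\rangle_{\mathbb{R}^d}+M_g^{-1}\,dZ$. Matching this with the SDE above forces $Z$ to satisfy the pathwise Bernoulli ODE
\[
\frac{dZ}{dt}=\tfrac{1}{2}\|g(t)\|_{\mathbb{R}^d}^2 M_g^{-1}(t)Z^2,\qquad Z(0,x)=x,
\]
which via $d(Z^{-1})=-\tfrac{1}{2}\|g\|_{\mathbb{R}^d}^2 M_g^{-1}\,dt$ integrates explicitly to
\[
Y(t,x)=M_g^{-1}(t)\left[x^{-1}-\tfrac{1}{2}\int_0^t\|g(s)\|_{\mathbb{R}^d}^2 M_g^{-1}(s)\,ds\right]^{-1},
\]
defined up to the blow-up time $\tau(x)=\inf\{t\ge 0\colon x^{-1}=\tfrac{1}{2}\int_0^t\|g(s)\|_{\mathbb{R}^d}^2 M_g^{-1}(s)\,ds\}$.

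To close the argument I would verify the hypotheses of Theorem~\ref{T34}: $\tau(x)$ is strictly decreasing and continuous in $x>0$ and $\tau(0)=+\infty$, so $\tau(x)\le\tau(0)$; the displayed formula shows that $Y(t,x)$ is smooth in $x$ on its domain and strictly positive for $x>0$. Substituting $x\mapsto\int_t^{t+x}r_0(y)\,dy$, which is $C^1$ and strictly positive by the assumptions on $r_0$, Theorem~\ref{T34} produces the primitive $u(t,x)=Y\!\left(t,\int_t^{t+x}r_0(y)\,dy\right)$, and differentiating in $x$ yields the stated formula for $r$ together with the domain $\mathcal{D}$ and the stopping time $\tau(x,r_0)$. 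Uniqueness of the strong solution is a consequence of local Lipschitz estimates for the nonlinear drift in \eqref{E41}, combined with the bijection $x\mapsto\int_t^{t+x}r_0(y)\,dy$ that inverts the reduction of Proposition~\ref{P33}. I expect the main obstacle to be not the algebra but the bookkeeping required to check the regularity and continuity hypotheses of Theorem~\ref{T34}; once the closed form for $Y$ is available these verifications are direct.
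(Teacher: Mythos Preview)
Your proposal is correct and follows essentially the same route as the paper: reduce via Theorem~\ref{T34} to the Bernoulli-type SDE for the flow $Y$, solve it explicitly (Lemma~\ref{L42} in the paper), and then substitute $x\mapsto\int_t^{t+x}r_0(y)\,\d y$ and differentiate. The only cosmetic difference is in how the SDE is integrated: the paper first passes to $V=Y^{-1}$ and then applies variation of constants with $M_g$, whereas you first factor out $M_g^{-1}$ and then solve the resulting pathwise Bernoulli ODE---these are dual manipulations and yield the same closed form.
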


Note that $\tau(0,r_0)=+\infty$. The uniqueness follows as the nonlinear term is locally Lipschitz continuous. Thus the theorem is a direct consequence Theorem \ref{T34} and Lemma \ref{L42} below. Namely, consider  the following Bernoulli type equation
$$
\d y(t)= \frac{\|g(t)\|^2_{\mathbb{R}^d}}{2}y^2(t)\d t +  y(t)\langle g(t), \d W(t)\rangle_{\mathbb{R}^d},
$$ 
and let $Y$ be the corresponding flow, that is  
\begin{equation}\label{E43}
\begin{aligned}
\d Y(t,x)&= \frac{\|g(t)\|^2_{\mathbb{R}^d}}{2}Y^2(t,x)\d t +  Y(t,x)\langle g(t), \d W(t)\rangle_{\mathbb{R}^d},\\
Y(0, x)&=x.
\end{aligned} 
\end{equation}
\begin{lemma} \label{L42}
The unique  solution to \eqref{E43}  is given by the formula
\begin{equation}\label{E44}
Y(t,x)= \left( \frac{1}{x} - \frac{1}{2}\int_0^t M^{-1}_g(s)\|g(s)\|^2_{\mathbb{R}^d}\d s\right)^{-1}M^{-1}_g(t),
\end{equation}
where $M_g$ is given by \eqref{E42}. As in deterministic cases $Y$  is only local in time and defined up to 
$$
\tau(x):= \inf\left \{t\colon  \int_0^t M^{-1}_g(s)\|g(s)\|^2_{\mathbb{R}^d}\d s =\frac{2}{x}\right\}.
$$
\end{lemma}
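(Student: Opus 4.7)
The plan is to reduce \eqref{E43} to a pathwise Bernoulli ODE by multiplying out the geometric-Brownian-motion factor. Specifically, I introduce $Z(t):=M_g(t)Y(t,x)$; the weight $M_g$ is engineered so that its own multiplicative noise cancels the $Y\langle g,\d W\rangle$ term in \eqref{E43}.

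To carry this out I would first compute $\d M_g$ by applying It\^o's formula to the exponential in \eqref{E42}, obtaining
$$
\d M_g(t)=M_g(t)\bigl(\|g(t)\|_{\mathbb{R}^d}^{2}\,\d t-\langle g(t),\d W(t)\rangle_{\mathbb{R}^d}\bigr).
$$
Then applying It\^o's product formula to $Z=YM_g$, and using the cross-variation $\d[Y,M_g]_{t}=-Y(t)M_g(t)\|g(t)\|_{\mathbb{R}^d}^{2}\,\d t$, the two stochastic integrals cancel against each other and the two $YM_g\|g\|^{2}\,\d t$ drift contributions also cancel, leaving the single term
$$
\d Z(t)=\tfrac12\|g(t)\|_{\mathbb{R}^d}^{2}\,M_g^{-1}(t)\,Z(t)^{2}\,\d t,\qquad Z(0)=x.
$$
This is a scalar Bernoulli ODE with random but pathwise-continuous coefficients, and it can be solved by separation of variables: $\d(Z^{-1})=-\tfrac12\|g\|_{\mathbb{R}^d}^{2}\,M_g^{-1}\,\d t$, so
$$
\frac{1}{Z(t)}=\frac{1}{x}-\frac{1}{2}\int_0^t M_g^{-1}(s)\|g(s)\|_{\mathbb{R}^d}^{2}\,\d s.
$$
Inverting and multiplying by $M_g^{-1}(t)$ reproduces the formula \eqref{E44}, while the maximal interval of existence is precisely the first hitting time of zero of the right-hand side above, which is exactly the stated $\tau(x)$.

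Uniqueness follows from pathwise uniqueness: the drift $\tfrac12\|g\|_{\mathbb{R}^d}^{2}y^{2}$ and diffusion $yg$ are locally Lipschitz in $y$, so on any interval $[0,\tau(x)-\delta]$ the standard uniqueness result for SDEs with locally Lipschitz coefficients applies, either directly to \eqref{E43} or, equivalently, to the transformed ODE for $Z$. I expect the main obstacle to be the It\^o bookkeeping in computing $\d(YM_g)$, where one must carefully verify that both the martingale terms and the two $YM_g\|g\|^{2}\,\d t$ drift contributions (one from $\d M_g$, one from the cross-variation) cancel exactly; once that is done the remainder of the argument reduces to elementary ODE manipulations.
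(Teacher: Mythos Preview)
Your argument is correct. Both proofs ultimately combine the same two ingredients---the reciprocal (Bernoulli) substitution and the exponential martingale $M_g$---but you apply them in the opposite order. The paper first sets $V=Y^{-1}$, obtaining via It\^o's formula the \emph{linear} SDE
\[
\d V = \|g\|^{2}\Bigl(-\tfrac12+V\Bigr)\d t - V\langle g,\d W\rangle,
\]
and then solves it by variation of constants, writing $V=cM_g$ with $c$ of finite variation. You instead first multiply by $M_g$ to kill the noise, arriving at a purely pathwise Bernoulli ODE for $Z=M_gY$, and only then take reciprocals. The two routes are dual: your $Z$ is exactly $1/c$ in the paper's notation. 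Your ordering has the modest advantage that after the first It\^o product computation no further stochastic calculus is required (in particular, $\d(Z^{-1})$ needs no It\^o correction since $Z$ has finite variation), whereas the paper's ordering stays closer to the textbook ``Bernoulli $\to$ linear $\to$ integrating factor'' template but must handle a genuine linear SDE at the intermediate step.
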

\begin{proof} Since the coefficients are locally Lipschitz the uniqueness follows.  To find a formula for the solution we will use the same transformation as for the deterministic Bernoulli equation. Namely, let 
$$
V(t,x)= \frac{1}{Y(t, x)}=Y^{-1}(t,x).  
$$
With a slight abuse of the It\^o formula we have 
\begin{align*}
\d V(t,x)&= \left[ -Y^{-2}(t,x)\frac{\|g(t)\|^2_{\mathbb{R}^d}}{2}Y^2(t,x)+ Y^{-3}(t,x)\|g(t)\|^2_{\mathbb{R}^d}Y^2(t,x)\right]\d t \\
&\qquad - Y^{-2}(t,x)Y(t,x)\langle g(t), \d W(t)\rangle_{\mathbb{R}^d}\\
&= \|g(t)\|^2_{\mathbb{R}^d}\left( -\frac{1}{2}+ V(t,x)\right)\d t -V(t,x)\langle g(t), \d W(t)\rangle_{\mathbb{R}^d}. 
\end{align*}
We are going to solve the equation on $V$ using the variation of constant formula. Note that $M_g$ is  the solution to the homogeneous equation; that is  
$$
\d M_g(t)= \|g(t)\|^2_{\mathbb{R}^d}M_g(t)\d t -M_g(t)\langle g(t), \d W(t)\rangle_{\mathbb{R}^d},\qquad M_g(0)=1. 
$$
We are looking for $V$ of the form 
$$
V(t,x)= c(t,x)M_g(t),
$$
where $c(\cdot,x)$ is an absolutely continuous function. Then 
\begin{align*}
\d V(t,x)&= c(t,x)\d M_g(t)+ M_g(t)\frac{\partial c}{\partial t}(t,x)\d t.
\end{align*}
Therefore 
$$
\frac{\partial c}{\partial t}(t,x)= -\frac{\|g(t)\|^2_{\mathbb{R}^d}}{2M_g(t)}. 
$$
Taking into account the initial value condition 
$$
V(0,x)= \frac{1}{Y(0,x)}= \frac{1}{x},
$$
we obtain 
$$
V(t,x)= \left( \frac{1}{x} -\frac{1}{2} \int_0^t M^{-1}_g(s)\|g(s)\|^2_{\mathbb{R}^d}\d s\right)M_g(t). 
$$
 \end{proof}
 
 Let $Y$ be the solution to equation \eqref{E43}. Then by Proposition \ref{P33}, the primitive $u(t,x)=\int_0^xr(t,y)\d t$ is given by formula 
 $$
 u(t,x)= Y\left(t,\int_t^{t+x} r_0(y)\d y\right). 
 $$
 Therefore after differentiation we arrive at the formula on $r$ from Theorem \ref{T41}. 
 
\section{Original formulation and existence of regular solution} 
If $r$ satisfies \eqref{E41}, then the forward rates $f(t,S)$, $S\ge t$,  solve 
\begin{equation}\label{E51}
\begin{aligned}
f(t,S)&=f(0,S)+ \int_0^t \|g(s)\|_{\mathbb{R}^d}^2 f(s,S) \int_s^S f(s, u)\d u \d s\\
&\qquad + \int_0^t f(s,S)\langle g(s), \d W(s)\rangle_{\mathbb{R}^d}. 
\end{aligned}
\end{equation}
By Theorem \ref{T41}, we have 
$$
 \begin{aligned}
 f(t,S)&= f(0,S)\left[ \left( \int_t^Sf(0,s)\d s\right)^{-1} -\frac{1}{2}\int_0^t M_g^{-1}(s)\|g(s)\|_{\mathbb{R}^d}^2 \d s\right]^{-2}\\
 &\qquad \times \left( \int_t^Sf(0,s)\d s\right)^{-2} M_g^{-1}(s). 
 \end{aligned}
$$
 
Let us fix a finite time horizon $T^*<+\infty$. Assume for simplicity that $d=1$ and $g\equiv 1$.  We are looking for a continuous random field 
$$
f\colon \{0\le t\le S\le T^*\}\mapsto [0,+\infty), 
$$
such that for each $S$, $f(t,S)$, $0\le t\le S$ solves \eqref{E51}. Such field is called a \emph{regular solution}, see \cite{Barski-Zabczyk}. The following result  was established by Morton \cite{Morton}, see also \cite{Barski-Zabczyk}, Chapter 14 for a  general L\'evy case. 
\begin{theorem}
For any $\varepsilon \in (0,1)$ there is a $K>0$ such that if $f(0,S)\ge K$ for $S\in [0,T^*]$, then with probability bigger that $1-\varepsilon$ the regular solution does not exist. 
\end{theorem}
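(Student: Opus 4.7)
The approach is to read non-existence of a regular solution directly off the explicit formula of Theorem \ref{T41}, specialized to $d=1$ and $g\equiv 1$, and to write $r_0=f(0,\cdot)$. A continuous, $[0,+\infty)$-valued regular solution $f$ on the triangle $\{0\le t\le S\le T^*\}$ corresponds, via $r(t,x):=f(t,t+x)$, to a strong Musiela solution that is finite and continuous on $\{(t,x):t,x\ge 0,\ t+x\le T^*\}$. By strong uniqueness (the coefficients are locally Lipschitz) together with Theorem \ref{T41}, any such $r$ must coincide with the explicit formula on its domain, so the regular solution exists if and only if $\tau(x,r_0)>t$ throughout that triangle. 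In view of the formula for $\tau$ in Theorem \ref{T41}, this is equivalent to
\[
\Phi(s,x):=\int_s^{s+x} r_0(y)\,\d y\cdot\int_0^s M_g^{-1}(u)\,\d u<2\quad\text{for every $s,x\ge 0$ with $s+x\le T^*$.}
\]

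The next step is to exhibit a single test point at which $\Phi$ exceeds $2$ with high probability under the hypothesis $r_0\ge K$. I would take $s=x=T^*/2$. The assumption $f(0,S)\ge K$ on $[0,T^*]$ gives $\int_{T^*/2}^{T^*} r_0(y)\,\d y\ge KT^*/2$, so with
\[
\Xi:=\int_0^{T^*/2} M_g^{-1}(u)\,\d u
\]
one obtains $\Phi(T^*/2,T^*/2)\ge (KT^*/2)\,\Xi$. Hence, on the event $\{\Xi\ge 4/(KT^*)\}$, $\Phi(T^*/2,T^*/2)\ge 2$, and by the previous paragraph no regular solution exists.

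The conclusion then follows from almost sure positivity: $M_g^{-1}$ is the inverse of a stochastic exponential started at $1$, hence a strictly positive continuous process, and so $\Xi>0$ $\mathbb{P}$-almost surely. Consequently $\mathbb{P}(\Xi\ge 4/(KT^*))\to 1$ as $K\to\infty$. Given $\varepsilon\in(0,1)$, one picks $K$ so large that this probability exceeds $1-\varepsilon$; this is the required constant.

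The only delicate point is the very first step: tying non-existence of a regular (continuous, $[0,+\infty)$-valued) $f$ to explosion of the Musiela flow somewhere in the triangle. It rests on strong uniqueness, so that any putative regular $f$ forces $r(t,x):=f(t,t+x)$ to agree with the explicit formula of Theorem \ref{T41} up to its blow-up time; a blow-up at some $(s,x)$ with $s+x\le T^*$ then contradicts continuity and finiteness of $f$. Everything beyond that reduces to the almost sure positivity of $\Xi$, which is elementary.
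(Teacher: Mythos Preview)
Your proof is correct and follows essentially the same route as the paper: read off non-existence from the explicit blow-up time in Theorem~\ref{T41} and use the almost-sure positivity of $\int_0^{t} M_g^{-1}(s)\,\d s$ to force the denominator to vanish with probability exceeding $1-\varepsilon$ once $K$ is large. The paper packages this as a slightly stronger statement (arbitrary $0<t<S\le T^*$ with $f(0,\cdot)\ge K$ only on $[t,S]$) and appends a Jensen-type lower bound $\int_0^t \e^{W(v)-v/2}\,\d v\ge t\e^{-t/2}\e^{t^{-1}\int_0^t W(v)\,\d v}$, but your direct argument at the test point $s=x=T^*/2$ with bare almost-sure positivity of $\Xi$ is entirely adequate for the theorem as stated.
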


We will show that the theorem above follows from our formula of solution. In fact the following stronger result holds true. 
\begin{theorem}
For arbitrary  $\varepsilon \in (0,1)$, and $t,S$ such that $0<t< S\le T^*$ there is a $K>0$ such that if $f(0,T)\ge K$ for $T\in [t,S]$, then with probability bigger that $1-\varepsilon$ the regular solution $f(\cdot ,S)$ does not exist on the time  interval $[0,t]$. 
\end{theorem}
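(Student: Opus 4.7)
The strategy is to read the blow-up criterion directly off the closed-form formula of Theorem~\ref{T41} and then reduce the statement to a trivial one-sided probability estimate on an almost-surely strictly positive random variable.

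First I will rewrite the domain of existence in convenient variables. In the scalar case $d = 1$, $g \equiv 1$, the formula of Theorem~\ref{T41} for $f(s, S) = r(s, S - s)$ is well defined precisely when
$$
\left(\int_s^S f(0, y)\, \d y\right)^{-1} - \tfrac{1}{2}\int_0^s \e^{W(u) - u/2}\, \d u > 0.
$$
Introducing
$$
\phi(s) := \int_s^S f(0, y)\, \d y, \qquad \psi(s) := \tfrac{1}{2}\int_0^s \e^{W(u) - u/2}\, \d u,
$$
this is nothing but $\phi(s)\psi(s) < 1$. Hence the regular solution $f(\cdot, S)$ exists on $[0, t]$ iff $\phi(s)\psi(s) < 1$ for every $s \in [0, t]$. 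Because $s \mapsto \phi(s)\psi(s)$ is continuous with $\phi(0)\psi(0) = 0$, exhibiting a single $s_0 \in [0, t]$ with $\phi(s_0)\psi(s_0) \ge 1$ already rules out existence (an intermediate zero of the bracket is produced by the IVT).

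Second, I exploit the hypothesis $f(0, T) \ge K$ on $[t, S]$ at the endpoint $s_0 = t$:
$$
\phi(t) = \int_t^S f(0, y)\, \d y \ge K(S - t),
$$
so $\phi(t)\psi(t) \ge K(S - t)\psi(t)$, and it remains only to push the right-hand side above $1$ on an event of probability at least $1 - \varepsilon$.

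Third, I choose the constant. Since $t > 0$ and $u \mapsto \e^{W(u) - u/2}$ is continuous and strictly positive, $\psi(t) > 0$ almost surely; hence for the given $\varepsilon$ there is $\delta > 0$ with $\mathbb{P}(\psi(t) \ge \delta) \ge 1 - \varepsilon$. Setting $K := 1/((S - t)\delta)$ closes the argument: on $\{\psi(t) \ge \delta\}$ one has $K(S - t)\psi(t) \ge 1$, and the first step delivers non-existence of $f(\cdot, S)$ on $[0, t]$.

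I do not foresee a real obstacle. The closed form of Theorem~\ref{T41} does all the analytic work, and the probabilistic content reduces to $\mathbb{P}(\psi(t) > 0) = 1$. The only point to handle with some care is the equivalence \emph{existence on $[0, t]$} $\Leftrightarrow$ \emph{$\phi\psi < 1$ on $[0, t]$}, which via continuity and $\phi(0)\psi(0) = 0$ lets the supremum condition be replaced by a single-point condition at $s_0 = t$.
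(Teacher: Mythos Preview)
Your proof is correct and follows essentially the same route as the paper: both read off the blow-up criterion from the explicit formula of Theorem~\ref{T41}, bound $\int_t^S f(0,y)\,\d y \ge K(S-t)$, and then choose $K$ so that the random factor $\tfrac12\int_0^t \e^{W(u)-u/2}\,\d u$ dominates with probability at least $1-\varepsilon$. The only difference is in this last step: the paper applies Jensen's inequality to get the explicit lower bound $\int_0^t \e^{W(v)-v/2}\,\d v \ge t\,\e^{-t/2}\exp\!\bigl(\tfrac{1}{t}\int_0^t W(v)\,\d v\bigr)$ and then lets $K\to\infty$ in the resulting Gaussian tail, whereas you simply use that $\psi(t)>0$ almost surely and pick $\delta$ via continuity of measure. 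Your argument is shorter and perfectly adequate for the stated theorem; the paper's version has the minor advantage of making the dependence of $K$ on $t$, $S-t$, and $\varepsilon$ explicit.
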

\begin{proof} Let us fix $0<t<S\le T^*$. Set 
$$
 \eta(s,S):= \left( \int_s^Sf(0,v)\d v\right)^{-1} -\frac{1}{2}\int_0^s M_1^{-1}(v)\d v.
 $$
It is enough  to show that 
$$
\mathbb{P}\left\{ \exists\, 0\le s\le t\colon \eta (s,S)=0\right\}\ge 1-\varepsilon. 
$$
 Assume that $f(0,T)\ge K>0$ for all $T\in [t,S]$. Then, 
 $$
 \left(\int_s^S f(0,v)\d v\right)^{-1}\le \left[ (S-t)K\right]^{-1} \qquad \text{for $s\le t$,}
 $$
 and it is enough to show that for $K$ large enough 
 $$
 \mathbb{P}\left\{  \frac{1}{2}\int_0^t M_1^{-1}(v)\d v\ge \left[ (S-t)K\right]^{-1} \right\}\ge 1-\varepsilon. 
$$
To justify the above note that 
\begin{align*}
\int_0^t M_1^{-1}(v)\d v&= \int_0^t \e^{W(v)- \frac{v}{2}}\d v\ge \e^{-\frac{t}{2}} \int_0^t \e^{W(v)}\d v\ge t \e^{-\frac{t}{2}} \e^{\frac{1}{t}\int_0^t W(v)\d v}. 
\end{align*}
Therefore 
\begin{align*}
&\mathbb{P}\left\{ \int_0^t M_1^{-1}(v)\d v\ge  \left[ (S-t)K\right]^{-1} \right\}\\
&\qquad \ge \mathbb{P}\left\{ \int_0^t W(v)\d v\ge - t \e^{\frac{t}{2}}\log \left[  t  (S-t)K \right] \right\}. 
\end{align*}
\end{proof}
  \section{Jump martingale case}
Let $L$ be a  jump L\'evy martingale  with the L\'evy measure $\nu$. Recall that $J$ is given by \eqref{E27}.  We assume that there is an $m\ge 0$ such that 
\begin{equation}\label{E61}
\begin{aligned}
&\text{$\nu$ has support in $[-m,+\infty)^d$, $g(t)\in [0,m)^d$,  and }\\
&\int_{\mathbb{R}^d} \max\left\{\|\eta\|_{\mathbb{R}^d},  \|\eta\|^2_{\mathbb{R}^d}\right\}\nu(\d \eta)<+\infty.
\end{aligned}
\end{equation}
Clearly  \eqref{E61} ensures that $J\colon (0,+\infty)^d\mapsto \mathbb{R}$ is of the class $C^2$ with bounded derivatives of the first order. Moreover, see Theorem 20.14 from \cite{Peszat-Zabczyk},  \eqref{E61} preserves positivity; that is if $r_0(x)\ge 0$ for $x\ge 0$, then $r(t,x)\ge  0$ for all $t\ge 0$ and $x\ge 0$. 

We are going to  establish existence and desired regularity of the flow $Y$ defined  by \eqref{E33}. Note that as  $J\colon [0,+\infty)\mapsto \mathbb{R}$ has bounded derivative, it is Lipschitz continuous. Let 
$$
\begin{aligned}
A(t)&=\e ^{Z(t)} \prod_{s\le t} \left( 1+\Delta Z(s)\right)\e ^{-\Delta Z(s)}, \qquad t\ge 0, \\
Z(t)&=  \langle g(t),L(t)\rangle_{\mathbb{R}^d}, 
\end{aligned}
$$
Note that $A$ and $Z$ are  positive c\`adl\`ag processes with increasing trajectories.
\begin{proposition}\label{P61}
$(i)$ For each non-negative $x\ge 0$ there is a unique non-negative global in time solution $\left(Y(t, x),\, t\ge 0\right)$ to $\eqref{E33}$. Moreover, for each $t$, $Y(t,x)$ depends  continuously on $x$, $\mathbb{P}$-a.s.. \\
$(ii)$ There exists a version of $Y$ differentiable in $x$ and
\begin{equation}\label{E63}
\frac{\partial Y}{\partial x} (t, x) =\e^{\int_0^t \langle g(s), \nabla J(g(s)Y(s,x))\rangle_{\mathbb{R}^d}\d s} A(t). 
\end{equation}
\end{proposition}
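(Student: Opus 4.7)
The plan is to treat parts (i) and (ii) separately. Part (i) is essentially a standard Lévy-SDE existence statement: under \eqref{E61} the drift $y\mapsto J(g(t)y)$ is globally Lipschitz in $y$ (because $\nabla J$ is bounded on $(0,+\infty)^d$), and the diffusion coefficient $y\mapsto y\langle g(t),\cdot\rangle$ is linear. Standard existence--uniqueness theory for Lévy-driven SDEs with Lipschitz coefficients (e.g.\ \cite{Applebaum}) then yields a unique global adapted c\`adl\`ag solution $Y(\cdot,x)$ for each $x\ge 0$. Non-negativity is precisely the content of Theorem~20.14 of \cite{Peszat-Zabczyk}, already invoked just before the proposition. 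Continuous dependence in $x$ follows from the standard BDG--Grönwall estimate $\mathbb{E}\sup_{s\le t}|Y(s,x)-Y(s,x')|^2\le C(t)\,|x-x'|^2$ combined with a Kolmogorov continuity argument to produce a jointly continuous version in $(t,x)$.

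For part (ii) the plan is first to guess the formula and then to verify it. Formally differentiating \eqref{E33} in $x$ suggests that $V(t,x):=\partial Y(t,x)/\partial x$ solves the linear SDE
$$
\d V(t,x) = \langle g(t), \nabla J(g(t)Y(t,x))\rangle\,V(t,x)\,\d t + V(t-,x)\langle g(t), \d L(t)\rangle,\qquad V(0,x)=1.
$$
Along any fixed trajectory of $Y$ this is a linear equation whose drift coefficient $a(t):=\langle g(t),\nabla J(g(t)Y(t,x))\rangle$ is a bounded adapted process, so its unique solution equals $\exp\!\bigl(\int_0^t a(s)\,\d s\bigr)\,\mathcal{E}(Z)_t$, where $Z(t)=\int_0^t\langle g(s),\d L(s)\rangle$ and $\mathcal{E}(Z)$ is its Dol\'eans--Dade exponential. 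Since $L$ is purely discontinuous, $[Z,Z]^c=0$, and $\mathcal{E}(Z)_t$ reduces exactly to $A(t)=\e^{Z(t)}\prod_{s\le t}(1+\Delta Z(s))\e^{-\Delta Z(s)}$. This matches \eqref{E63} on the nose.

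To upgrade this formal calculation to a genuine $x$-differentiability statement, I would analyse the difference quotient $D_h(t,x):=(Y(t,x+h)-Y(t,x))/h$. Subtracting the SDEs for $Y(\cdot,x+h)$ and $Y(\cdot,x)$, dividing by $h$ and applying the mean value theorem to $J$ shows that $D_h$ satisfies a linear SDE of the same type as $V$ but with $\nabla J$ evaluated at intermediate points. A BDG--Grönwall comparison with the equation for $V$ then gives $\mathbb{E}\sup_{s\le t}|D_h(s,x)-V(s,x)|^2\to 0$ as $h\to 0$, and a further Kolmogorov-type estimate provides a modification of $Y$ that is $C^1$ in $x$ with derivative $V$. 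The main obstacle is the control of the jump part when passing to the limit: this requires the square-integrability hypothesis $\int\|\eta\|^2\,\nu(\d\eta)<+\infty$ from \eqref{E61} to apply BDG to the stochastic-integral term, together with uniform-in-$h$ $L^2$-bounds on $D_h$ and the continuity of $\nabla J$ for dominated convergence in the drift integral.
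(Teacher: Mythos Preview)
Your proposal is correct and lands on exactly the same core computation as the paper: both identify that $\partial Y/\partial x$ solves the linear SDE $\d V=\langle g,\nabla J(gY)\rangle V\,\d t+V_{-}\,\d Z$ and then solve it via the Dol\'eans--Dade exponential, using that $[Z,Z]^c=0$ to reduce $\mathcal{E}(Z)_t$ to $A(t)$. The only real difference is in how the flow regularity is justified. The paper does not carry out your difference-quotient/BDG--Gr\"onwall/Kolmogorov programme at all; it simply invokes Protter's stochastic-flow theorems (Theorem~37, p.~308 for continuous dependence on the initial condition, and Theorem~39, p.~312 for differentiability and the SDE satisfied by the derivative), after noting that $\nabla J$ is Lipschitz under \eqref{E61}. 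Your route is more self-contained and makes explicit where each moment hypothesis in \eqref{E61} enters; the paper's route is a two-line citation. You are also more careful than the paper in addressing non-negativity of $Y$, which the paper's proof passes over in silence even though the statement asserts it.
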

\begin{proof}
By Protter \cite{Protter}, Theorem 37, p. 308, the unique solution to $\eqref{E33}$ depends continuously on $x$.  Since  $\nabla J$  is Lipschitz, by Protter \cite{Protter}, Theorem 39, p. 312, the solution $Y$ is differentiable in $x$ and the derivative satisfies
\begin{align*}
\d \frac{\partial Y}{\partial x}(t, x) &= \langle g(t), \nabla J\left( g(t)Y(t,x)\right)\rangle_{\mathbb{R}^d} \frac{\partial Y}{\partial x}(t,x) \d t \\
&\quad + \frac{\partial Y}{\partial x}(t-, x)\langle g(t),  \d L(t)\rangle _{\mathbb{R}^d} \\
&= \frac{\partial Y}{\partial x}(t-,x) \d X(t),\\
\frac{\partial Y}{\partial x}(0,x)&=1, 
\end{align*}
where
\begin{align*}
X(t) &=\langle g(t), L(t)\rangle_{\mathbb{R}^d} +\int_0^t\langle g(s), \nabla J\left(g(s)Y(s,x)\right)\rangle_{\mathbb{R}^d}\d s\\
&=Z(t)+\int_0^t\langle g(s), \nabla J\left(g(s)Y(s,x)\right)\rangle_{\mathbb{R}^d}\d s. 
\end{align*}
By the Dol\'ean--Dade  formula (see e.g. Protter \cite{Protter}, Theorem 37, p. 84),
$$
\frac{\partial Y}{\partial x}(t,x)=\e ^{X(t)} \prod_{s\le t} \left( 1+\Delta X(s)\right)\e ^{-\Delta X(s)}.
$$
Since $\Delta X(s) =\Delta Z(s)$,  we have \eqref{E63}. 
\end{proof}

We can therefore apply our Theorem \ref{T34}. In consequence, if  $Y(t, x)$, $t\ge 0$, $x\ge 0$, is  the solution to \eqref{E33},  then
$$
u(t, x) =Y\left(t, \int_t^{t+x} r_0(y)\d y \right) ,\quad t\ge 0,\ x\ge 0,
$$
is a solution to \eqref{E31}.

\begin{remark}
In the jump martingale case the problem of solving the corresponding HJM equation is reduced to the solving stochastic differential equation \eqref{E33}. In Proposition \ref{P61} we provide some regularity results on the  solution to \eqref{E33}. Contrary to the gaussian case, we are not able to find an explicite solution to \eqref{E33}. Note that in the ``simplest'' case of $d=1$ and $\nu=\delta_1$, 
$$
J(\xi)= \e ^{-\xi}-1+ \xi. 
$$
This case is treated in Section \ref{SPoisson}. 
\end{remark}

In the next section we will study an interesting example of jump L\'evy process in which we are able to find an explicite formulae  for solutions to auxiliary equation \eqref{E33} and the HJM equation. 

\section{$\alpha$-subordinator case}\label{S6}
Assume that $d=1$ and for simplicity that $g\equiv 1$. Let    $L$ be  the $\alpha$-stable subordinator, where $\alpha \in (0,1)$. Then $L$ is given by \eqref{E26},
and $J(\xi)=- \xi  ^\alpha$ for $\xi\ge 0$, and $J(\xi)=+\infty$ for $\xi<0$.  Assume that for any $x\ge 0$, $r_0(x)\ge 0$. The HJMM equation has the form 
\begin{equation}\label{E71}
\begin{aligned}
\d r(t,x)&= \left[ \frac{\partial r}{\partial x}(t,x)- \frac{\partial }{\partial x} \left(  \int_0^x r(t,x)\right )^{\alpha} \right]\d t \\
&\quad + r(t-,x)\d L(t),\\
r(0,x)&= r_0(x), 
\end{aligned}
\end{equation}
In this case condition \eqref{E61} is not satisfied. However, as in the gaussian case we are able to find en explicite solution to the flow equation \eqref{E33}. Recall, see \eqref{E27}, that $\pi$ is  the Poisson jump measure of $L$. Let 
\begin{equation}\label{E72}
\begin{aligned}
\widetilde L(t)&:= \int_0^t\int_{0}^{+\infty} \left[ (1+\eta)^{1-\alpha} -\eta^{1-\alpha}\right] \pi(\d s, \d \eta),\\
M(t)&:=\e ^{\widetilde L(t)} \prod_{s\le t} \left( 1+\Delta \widetilde L(s)\right)\e ^{-\Delta \widetilde L(s)}, \\
G(t,x)&:=\left( \int_t^{t+x} r_0(s)\d s\right)^{1-\alpha}-(1-\alpha )\int_0^t \frac{\d s}{M(s)}. 
\end{aligned}
\end{equation}
\begin{theorem}\label{T71}
A  global in time strong solution to \eqref{E71} is given by the formula 
\begin{equation}\label{E73}
\begin{aligned}
r(t,x)&= M(t)^{\frac{1}{1-\alpha}}\left[ G(t,x)\right]^{\frac{\alpha}{1-\alpha}}\chi_{[0,+\infty)}(G(t,x))\\
&\quad \times \left(\int_t^{t+x} r_0(s)\d s\right)^{-\alpha} r_0(t+x). 
\end{aligned}
\end{equation} 
\end{theorem}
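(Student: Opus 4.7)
The strategy is to apply Theorem~\ref{T34}. With $d=1$, $g\equiv 1$, and $J(\xi)=-\xi^\alpha$ on $\xi\ge 0$, the auxiliary flow equation \eqref{E33} becomes the Bernoulli-type SDE
\begin{equation*}
\d Y(t,y)=-Y(t,y)^\alpha\,\d t+Y(t-,y)\,\d L(t),\qquad Y(0,y)=y,
\end{equation*}
and once $Y$ is in hand one reads off $r(t,x)=\partial_x Y\bigl(t,\int_t^{t+x}r_0(s)\,\d s\bigr)$ from Theorem~\ref{T34}. Thus the entire problem reduces to integrating this SDE and differentiating a composition.

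To solve the SDE I would imitate the Gaussian Lemma~\ref{L42} and linearise via the substitution $V(t,y):=Y(t,y)^{1-\alpha}$. Between consecutive jumps of $L$ one has $\d Y=-Y^\alpha\,\d t$, so $\d V=-(1-\alpha)\,\d t$; at a jump of $L$ of size $\eta>0$, $Y(s)=Y(s-)(1+\eta)$, hence $V(s)=V(s-)(1+\eta)^{1-\alpha}$. Therefore $V$ satisfies the inhomogeneous linear equation with multiplicative jumps
\begin{equation*}
\d V(t,y)=-(1-\alpha)\,\d t+V(t-,y)\bigl[(1+\Delta L(t))^{1-\alpha}-1\bigr],\qquad V(0,y)=y^{1-\alpha}.
\end{equation*}
Variation of constants against the Dol\'ean--Dade exponential $M$ from \eqref{E72} (which is constant between jumps and has multiplicative jump ratio $M(s)/M(s-)=(1+\Delta L(s))^{1-\alpha}$) gives $V(t,y)=M(t)\bigl[y^{1-\alpha}-(1-\alpha)\int_0^t M(s)^{-1}\,\d s\bigr]$, so
\begin{equation*}
Y(t,y)=M(t)^{\frac{1}{1-\alpha}}\Bigl[y^{1-\alpha}-(1-\alpha)\!\int_0^t\!M(s)^{-1}\,\d s\Bigr]^{\frac{1}{1-\alpha}}
\end{equation*}
whenever the bracket stays positive.

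Setting $y=\int_t^{t+x}r_0(s)\,\d s$ then gives the primitive $u(t,x)=M(t)^{1/(1-\alpha)}G(t,x)^{1/(1-\alpha)}$, with $G$ as in \eqref{E72}. Differentiating in $x$, using $\partial_x G(t,x)=(1-\alpha)\bigl(\int_t^{t+x}r_0(s)\,\d s\bigr)^{-\alpha}r_0(t+x)$ together with $\frac{1}{1-\alpha}-1=\frac{\alpha}{1-\alpha}$, reproduces \eqref{E73} on $\{G(t,x)>0\}$. On the complementary set the formula is extended by zero, which is exactly what the indicator $\chi_{[0,+\infty)}(G(t,x))$ in \eqref{E73} records, and Theorem~\ref{T34} then delivers the strong-solution property and global in time existence.

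The main obstacle is that this example falls outside the regularity hypotheses of Proposition~\ref{P61}: condition \eqref{E61} fails because $\nabla J(\xi)=-\alpha\xi^{\alpha-1}$ blows up at $0$, so continuous dependence and $x$-differentiability of $Y$ cannot be imported from the general Lipschitz theory of \cite{Protter}. Three subtler points are (i) showing that the infinite product defining $M(t)$ converges $\mathbb{P}$-a.s., which reduces to $\sum_{s\le t}\log(1+\Delta L(s))\le L(t)<+\infty$ since $L$ is a subordinator; (ii) justifying that the explicit $V$ built patchwise between jumps genuinely solves the linear jump SDE in It\^o form, and in particular that the Bernoulli substitution $y\mapsto y^{1-\alpha}$ (not $C^1$ at $0$) is applied only where $Y>0$; and (iii) giving meaning to the ``global in time'' extension of $r$ by zero across the random boundary $\{G=0\}$, which in the subordinator setting is not a blow-up but a hitting-zero of the downward drift $-\xi^\alpha$ and has to be checked to be compatible with the notion of strong solution used in Theorem~\ref{T34}.
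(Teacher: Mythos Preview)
Your approach is essentially identical to the paper's: the same Bernoulli substitution $V=Y^{1-\alpha}$ to linearise the flow equation, the same variation-of-constants solution against the Dol\'eans--Dade exponential $M$, and the same differentiation of $u(t,x)=Y\bigl(t,\int_t^{t+x}r_0\bigr)$ to arrive at \eqref{E73}. The paper likewise does not address the regularity issues you flag at the end (it explicitly declines to discuss uniqueness at $Y=0$), so your list of obstacles is a fair accounting of what is being taken on faith.

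One discrepancy is worth recording. Your It\^o computation yields
\[
\d V(t,y)=-(1-\alpha)\,\d t+V(t-,y)\int_0^\infty\bigl[(1+\eta)^{1-\alpha}-1\bigr]\,\pi(\d t,\d\eta),
\]
so the linearised driver you need has jumps $(1+\eta)^{1-\alpha}-1$, whereas the $\widetilde L$ actually defined in \eqref{E72} has jumps $(1+\eta)^{1-\alpha}-\eta^{1-\alpha}$. Consequently the $M$ of \eqref{E72} does \emph{not} have the multiplicative jump ratio $(1+\Delta L)^{1-\alpha}$ that you assert; under the paper's definition the ratio is $1+(1+\eta)^{1-\alpha}-\eta^{1-\alpha}$. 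Your version of the It\^o formula is the standard one for a finite-variation pure-jump driver (the jump contribution is $f(Y_-\!+\Delta Y)-f(Y_-)$, not $f(Y_-\!+\Delta Y)-f(Y_-\Delta L)$), and the paper's proof carries the $\eta^{1-\alpha}$ through from what appears to be a slip in that term; so your computation is the internally consistent one, but it does not literally reproduce the $M$ of \eqref{E72}.
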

\begin{proof} Note that  in this case  equation \eqref{E33} for the flow has now the form 
\begin{equation}\label{E74}
\begin{aligned}
\d Y(t,x)&=  -Y(t,x)  ^{\alpha}\d t + Y(t-,x)\d L(t)\\
&= -Y(t,x) ^{\alpha}\d t + Y(t-,x)\int_{0}^{+\infty}  \eta \pi(\d t, \d \eta). 
\end{aligned}
\end{equation}
Clearly $y\equiv 0$ solves the equation
\begin{equation}\label{E75}
\begin{aligned}
\d y(t)&=  -y(t)^{\alpha}\d t + y(t-)\d L(t),\\
y(0)&= 0.
\end{aligned}
\end{equation}
We will not discuss the problem of the uniqueness of a solution to \eqref{E75}. 

Assume that $Y(t,x)>0$. Let $f(y)= y^{1-\alpha}$ and  $V= f(Y)$. Applying It\^o's formula we obtain
\begin{align*}
\d V(t,x)&= \d f(Y(t,x))\\
&= -f'(Y(t,x)) \left( Y(t,x)\right)  ^{\alpha}\d t \\
&\quad + \int_{0}^{+\infty} \left[ f\left(Y(t-,x)+ Y(t-,x)\eta \right)- f(Y(t-,x)\eta ) \right]  \pi(\d t, \d \eta)\\
&= -(1-\alpha) \d t + V(t-, x)\int_0^{+\infty} \left[ (1+\eta)^{1-\alpha}- \eta^{1-\alpha}\right] \pi(\d t,\d \eta)\\
&= -(1-\alpha) \d t + V(t-, x)\d \widetilde L(t). 
\end{align*}
Thus $V$ solves the problem 
\begin{align*}
\d V(t,x)&= -(1-\alpha)\d t + V(t-,x)\d \widetilde L(t), \qquad V(0,x)= x^{1-\alpha}. 
\end{align*}

Note that  $M$ given by \eqref{E72} is  the solution to the homogeneous problem 
$$
\d M(t)=M(t-)\d \widetilde L(t), \qquad M(0)=1. 
$$
We are looking for $V$ of the form 
$$
V(t,x)=c(t,x)M(t).
$$
with absolutely continuous $c$. We have 
$$
\d V(t,x)= M(t)\frac{\partial c}{\partial t}(t,x)\d t+ V(t-,x)\d \widetilde L(t).
$$
Therefore
$$
\frac{\partial c}{\partial t}(t,x)= -(1-\alpha)\frac{1}{M(t)}, \qquad c(0,x)=  x^{1-\alpha}, 
$$
and consequently: 
$$
c(t,x)= x^{1-\alpha}- (1-\alpha )\int_0^t \frac{\d s}{M(s)},
$$
\begin{equation}\label{E76}
V(t,x)= \left[ x^{1-\alpha}-(1-\alpha )\int_0^t \frac{\d s}{M(s)}\right]M(t).
\end{equation}
Clearly $V(t,x)>0$ for small $t$ but it may heat $0$ at some finite time. Therefore 
\begin{align*}
Y(t,x)&=V(t,x)^{\frac{1}{1-\alpha}}\chi_{[0,+\infty)}(c(t,x))\\
&= (c(t,x))^{\frac{1}{1-\alpha}} \chi_{[0,+\infty)}(c(t,x)) M(t)^{\frac{1}{1-\alpha}}. 
\end{align*}
is a (possibly not unique) solution to the flow equation. By Theorem \ref{T34}, $r(t,x)$ is given by \eqref{E73}. 
\end{proof}
\begin{remark} Assume that $V$ is given by \eqref{E76}. Then 
$$
Y(t,x)= \vert V(t,x)\vert ^{\frac{1}{1-\alpha}}, \qquad t\ge 0,\ x\ge 0,
$$
also is a solution to the flow equation 
$$
\begin{aligned}
\d Y(t,x)&=  -\left\vert Y(t,x)\right\vert   ^{\alpha}\d t + Y(t-,x)\d L(t)\\
Y(0,x)&= x, \qquad x\ge 0. 
\end{aligned}
$$
\end{remark}

\begin{remark} Note that in our case the solution is global. This follows also from a general result of Barski and Zabczyk \cite{Barski-Zabczyk}, Chapter 14. 
\end{remark}
\section{The case of Poisson process}\label{SPoisson}
Let us consider the simplest case of $d=1$, $L$ being a Poisson process with intensity $1$. Then, the L\'evy measure $\nu(\d x)= \delta_{1}(\d x)$, and  $J(\xi)= \e^{-\xi}-1$.
In the case of compensated noise $J(\xi)= \e^{-\xi}-1+\xi$.  For simplicity we assume that the volatility coefficient $g\equiv 1$. Since 
$$
\frac{\partial }{\partial x} \left(\e^{-r(t,x)}-1\right)= \frac{\partial }{\partial x} \e^{-r(t,x)}, 
$$
the corresponding  HJM equation has the form 
\begin{equation}\label{E81}
\begin{aligned}
\d r(t,x)&= \left[ \frac{\partial r}{\partial x}(t,x)+ \frac{\partial }{\partial x}  \e^{- r(t,x)}  \right]\d t + r(t-,x)\d L(t),\\
r(0,x)&= r_0(x). 
\end{aligned}
\end{equation}
Equation for the flow has the form 
\begin{equation}\label{E82}
\begin{aligned}
\d Y(t,x)&= \e^{- Y(t,x)}\d t + Y(t-,x)\d L(t),\\
Y(0,x)&= x. 
\end{aligned}
\end{equation}

Note that in the case of compensated noise $\widehat L(t)=L(t)-t$, the HJM equation is 
\begin{equation}\label{E83}
\begin{aligned}
\d r(t,x)&= \left[ \frac{\partial r}{\partial x}(t,x)+ \frac{\partial }{\partial x} \left(  \e^{- r(t,x)}  + r(t,x)\right) \right]\d t \\
&\qquad + r(t-,x)\d \widehat  L(t),\\
r(0,x)&= r_0(x). 
\end{aligned}
\end{equation}
However, \eqref{E82} is also  the corresponding equation for the flow. Therefore we have the following consequence of our general Theorem \ref{T34}. 
\begin{corollary}
Equations \eqref{E81} and \eqref{E83} have the same solutions. 
\end{corollary}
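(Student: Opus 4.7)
The plan is to invoke Theorem \ref{T34} and verify that both HJM equations \eqref{E81} and \eqref{E83} give rise to the same flow equation \eqref{E82}. Since Theorem \ref{T34} produces the strong solution in the explicit form
$$r(t,x) = \frac{\d}{\d x}\, Y\!\left(t,\ \int_t^{t+x} r_0(y)\,\d y\right),$$
two HJM equations whose flow equations coincide must admit identical strong solutions, and the corollary is proved.

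First I would write down the flow equation attached to each HJM equation via \eqref{E33}. For \eqref{E81}, the driver is the uncompensated Poisson process $L$, and the exponential part of the drift matches that in \eqref{E81}, so \eqref{E33} yields \eqref{E82} directly. For \eqref{E83}, the compensation $\widehat L = L - \cdot$ adds a linear term to the Lévy exponent (by the identity $\log \mathbb{E}\e^{-\xi \widehat L(1)} = \log\mathbb{E}\e^{-\xi L(1)} + \xi$), so \eqref{E33} becomes
$$\d Y(t,x) = \bigl(\e^{-Y(t,x)} + Y(t,x)\bigr)\,\d t + Y(t-,x)\,\d\widehat L(t).$$
Substituting $\d\widehat L = \d L - \d t$ and using that $Y(t-) = Y(t)$ outside a countable set (hence Lebesgue-a.e.), the two $Y\,\d t$ contributions cancel and what remains is exactly \eqref{E82}.

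Finally, since the drift $y \mapsto \e^{-y}$ of \eqref{E82} is locally Lipschitz and the noise is multiplicative with a single-atom Lévy measure, the flow $Y$ is uniquely determined and regular in $x$ by Proposition \ref{P61}-type arguments (Protter's theorems on SDE flows driven by semimartingales). Theorem \ref{T34} then applies to both \eqref{E81} and \eqref{E83} with the same $Y$, producing the same $r(t,x)$. The only obstacle is clerical, namely carefully bookkeeping the additive constant in the Lévy exponent and the linear correction from compensation; once these are unwound via the compensation identity above, the coincidence of the flow equations is mechanical and the conclusion is immediate.
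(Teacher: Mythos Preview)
Your proposal is correct and follows exactly the paper's reasoning: the paper likewise observes (in the paragraph immediately preceding the corollary) that \eqref{E82} is the flow equation attached to both \eqref{E81} and \eqref{E83}, and then invokes Theorem~\ref{T34}. Your explicit verification via the substitution $\d\widehat L=\d L-\d t$ and the resulting cancellation of the $Y\,\d t$ terms simply makes transparent what the paper asserts without computation.
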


Let $\tau_n$ be the time of $n$-th jump of $L$. Then 
\begin{align*}
Y(t,x)&= \log \left( \e^{Y(\tau_n,x)}+ t-\tau_n\right), \qquad t\in [\tau_n,\tau_{n+1}), \\
Y(\tau_n,x)&= 2 Y(\tau_n-,x)= 2\log\left( \e^{Y(\tau_{n-1},x)}+ \tau_n-\tau_{n-1}\right),\\
Y(0,x)&= x. 
\end{align*}
Consequently, $Z(t,x):= \e^{Y(t,x)}$ satisfies
\begin{align*}
Z(t,x)&= Z(\tau_n,x)+ t-\tau_n,\qquad  t\in [\tau_n,\tau_{n+1}),\\
Z(\tau_n,x)&= \left( Z(\tau_{n-1})+\tau_n-\tau_{n-1}\right)^2, \qquad n\ge 1, \\
Z(0,x)&=Z(\tau_0,x)= \e^{x}. 
\end{align*} 
Theorem \ref{T34} yelds the following result. 
\begin{theorem}
The unique solution to \eqref{E81} or equivalently \eqref{E83} is given by 
$$
r(t,x)= \frac{\d }{\d x} \log \left[ Z\left(t, \int_0^{t+x}r_0(y)\d y\right)\right]. 
$$
\end{theorem}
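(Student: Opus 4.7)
My plan is to apply Theorem~\ref{T34} directly to the flow $Y=Y(t,x)$ determined by \eqref{E82}, and then to rewrite the resulting formula using the substitution $Z=\e^{Y}$. Uniqueness and the fact that \eqref{E81} and \eqref{E83} have the same solutions are covered by the corollary stated just above the theorem, so the real content is to verify the regularity hypotheses of Theorem~\ref{T34} in the Poisson case and to identify $Y$ explicitly in terms of $Z$.

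Since $L$ is a standard Poisson process, I analyze the flow pathwise, alternating between deterministic stretches and jump times. Between consecutive jump times $\tau_n,\tau_{n+1}$ the equation \eqref{E82} reduces to the ODE $\d Y/\d t=\e^{-Y}$, whose solution is $Y(t,x)=\log\bigl(\e^{Y(\tau_n,x)}+t-\tau_n\bigr)$. At $\tau_n$ the jump $\Delta L(\tau_n)=1$ gives $\Delta Y(\tau_n,x)=Y(\tau_n-,x)$, equivalently $\e^{Y(\tau_n,x)}=\bigl(\e^{Y(\tau_n-,x)}\bigr)^{2}$. Setting $Z:=\e^{Y}$, these two observations reproduce precisely the recursive description of $Z$ stated in the excerpt, with $Z(0,x)=\e^{x}$. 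Because $\e^{-Y}\le1$ on each deterministic piece and the jumps only square $Z$, the flow $Y(\cdot,x)$ is global in time for every $x\ge0$; in particular the blow-up time $\tau(x)$ appearing in Theorem~\ref{T34} is $+\infty$, so the hypothesis $\tau(x)\le\tau(0)$ is trivial.

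It remains to verify $C^{1}$-dependence of $Y(t,\cdot)$ on $x$, required by Theorem~\ref{T34}. I would proceed by induction on the jump count: $Y(0,x)=x$ is $C^{1}$ with derivative~$1$; on each slab $[\tau_n,\tau_{n+1})$ the formula $Y(t,x)=\log\bigl(\e^{Y(\tau_n,x)}+t-\tau_n\bigr)$ inherits $C^{1}$-regularity and a strictly positive $x$-derivative from $Y(\tau_n,\cdot)$; and the jump map $Y(\tau_n-,\cdot)\mapsto 2Y(\tau_n-,\cdot)$ preserves both properties. Theorem~\ref{T34} then yields
\[
r(t,x)=\frac{\d}{\d x}Y\!\Bigl(t,\int_t^{t+x}r_0(y)\,\d y\Bigr)=\frac{\d}{\d x}\log Z\!\Bigl(t,\int_t^{t+x}r_0(y)\,\d y\Bigr),
\]
which is the stated formula (reading the integration bounds as in Theorem~\ref{T34}). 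The main technical nuisance I anticipate is the careful bookkeeping of the spatial derivative of $Y$ across the random jump times, ensuring that the inductive construction produces a version jointly measurable in $(t,x)$ and continuously differentiable in $x$ in the strong sense required by the definition of strong solution; everything else is a direct quotation of Theorem~\ref{T34} and the corollary preceding the statement.
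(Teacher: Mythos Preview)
Your approach is essentially the paper's own: solve the flow \eqref{E82} piecewise between and at the Poisson jump times, set $Z=\e^{Y}$, and then invoke Theorem~\ref{T34}. The paper simply writes ``Theorem~\ref{T34} yields the following result'' without checking the regularity hypotheses, whereas you supply the (easy) verification that $\tau(x)=+\infty$ and that $Y(t,\cdot)$ is $C^{1}$ by induction on jump intervals; you also correctly flag the discrepancy between the lower limit $0$ in the displayed formula and the lower limit $t$ dictated by Theorem~\ref{T34}.
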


\end{document}